\documentclass{elsarticle}

\usepackage{amssymb}
\usepackage{xcolor}
\usepackage{amsmath}
\usepackage{mathtools}
\usepackage{url}

\newtheorem{theorem}{Theorem}
\newtheorem{lemma}[theorem]{Lemma}
\newtheorem{corollary}[theorem]{Corollary}
\newtheorem{proposition}[theorem]{Proposition}
\newtheorem{remark}[theorem]{Remark}

\newtheorem{definition}[theorem]{Definition}
\newproof{proof}{Proof}

\newcommand{\ff}{\mathbb{F}}

\newcommand{\n}{\mathbb{N}}
\newcommand{\X}{\textbf{X}}
\newcommand{\spann}{\text{span}_{\n}\{\gamma_1, \gamma_2, \dots, \gamma_{m-1}\} }

\newcommand{\spannnn}{\text{span}_{\n}(\gamma_1, \dots \gamma_{k-1})}

\usepackage{mathtools}

\DeclarePairedDelimiter\floor{\lfloor}{\rfloor}

\begin{document}     
\title{A new criterion for the absolute irreducibility  of multivariate polynomials over finite fields}

\author[1]{Carlos Agrinsoni\corref{cor1}
  \fnref{fn1}}
\ead{cagrinso@pudue.edu}

\author[2]{Heeralal Janwa\fnref{fn2}}
\ead{heeralal.janwa@upr.edu}

\author[3]{Moises Delgado\fnref{fn3}}
\ead{moises.delgado@upr.edu}

 \cortext[cor1]{Corresponding author}
 \fntext[fn1]{Department of Mathematics, Purdue University, 150 N University St, West Lafayette, Indiana, 47907-2067, USA.}
 \fntext[fn2]{Department of Mathematics, University of Puerto Rico, Rio Piedras, College of Natural Sciences 17 University Ave. Ste 1701 San Juan PR, 00925-2537.}
 \fntext[fn3]{Department of Mathematics and Physics, University of Puerto Rico, Cayey, 205 Calle Antonio R. Barceló, Cayey PR, 00736.}

\begin{abstract}

A key property of an algebraic variety is whether it is absolutely irreducible, meaning that it remains irreducible over the algebraic closure of its defining field, and determining absolute irreducibility is important in algebraic geometry and its applications in coding theory, cryptography, and other fields.

Among the applications of absolute irreducibility are bounding the number of rational points via the Weil conjectures and establishing exceptional APN and permutation properties of functions over finite fields.
In this article, we present a new criterion for the absolute irreducibility of hypersurfaces defined by multivariate polynomials over finite fields. Our criterion does not require testing for irreducibility in the ground or extension fields, assuming that the leading form is square-free. We just require multivariate GCD computations and the square-free property. Since almost all polynomials are known to be square-free, our absolute irreducibility criterion is valid for almost all multivariate polynomials.

\end{abstract}
\begin{keyword}
Absolute irreducibility; finite fields; multivariate polynomials; degree-gap; square-free leading form; algebraic geometry; coding theory; cryptography.

\end{keyword}

\maketitle

\section{Introduction}

One of the most important problems in algebra and algebraic geometry is to test when a polynomial is irreducible (respectively, absolutely irreducible). Developing practical criteria to test irreducibility and absolute irreducibility is essential for applications in pure and applied mathematics. For polynomials in one variable, the Eisenstein criterion is a classical method to test irreducibility \cite{eisenstein1850}. This criterion has been generalized to test the irreducibility (and, in some cases, absolute irreducibility) of multivariate polynomials using Newton polygons, prime ideals and valuations (see, for example, \cite{Cassels1986, Maclane1938, KS1997}).

The absolute irreducibility property has many applications in different areas of mathematics such as algebraic geometry, combinatorics \cite{Szhonyi1997}, coding theory \cite{stichtenoth1993, JW1993, JMW1995}, cryptography \cite{HM2011}, finite geometry \cite{HM2012, Hirschfeld1979}, function field sieve \cite{Adleman1994}, permutation polynomials \cite{LN1983} and scattered polynomials \cite{BM2019}. Deep results such as the Weil, Deligne, and Bombieri bounds on the number of rational points and exponential sums require that the underlying variety be absolutely irreducible (for singular curves, see \cite{AP1996}).

The Weil conjectures play an essential role in applications to bound the number of rational points and exponential sums. Moreover, Deligne's Theorem that gives bounds on rational points requires that the underlying variety is absolutely irreducible \cite{deligne1974}. Generalizations of Weil’s conjectures to singular curves require that the polynomial defining the singular curve be absolutely irreducible \cite{AP1996}. 
The following concrete applications are particularly relevant to our results. In coding theory and cryptography, significant advances have been made in proving the exceptional conjecture of APN by demonstrating that a certain variety, defined by the corresponding multivariate polynomial, is absolutely irreducible or contains an absolutely irreducible component \cite{HM2011}. This conjecture was formulated for monomial functions by Janwa and Wilson \cite{JW1993} with significant progress. Their investigation showed that one of the existing absolute irreducibility testing criteria was effective, and they developed new methods.  After half a decade, Janwa gave an absolute irreducibility criterion based on Bezout's Theorem and intersection multiplicities. This led to significant progress by Janwa, McGuire, and Wilson \cite{JMW1995} who mostly settled the conjecture. After two decades,  it was finally proved by Hernando and McGuire \cite{HM2012}. Aubry, McGuire, and Rodier \cite{amw2010} generalized this conjecture for arbitrary functions over finite fields and the existence of an absolutely irreducible factor for the corresponding variety.

In this article, we consider hypersurfaces defined by a multivariate polynomial. There exist only a few criteria for testing the absolute irreducibility of a polynomial. 
The method of Newton polygons generalizes the Eisenstein criterion in two different ways: 1) Eisenstein-Dumas criterion \cite{dumas1906, Wan1995}; 2) the Stepanov criterion (sometimes called the Stepanov-Schmidt criterion) \cite{ schmidt2004, stepanov1972, stepanov1974}.      A more general criterion uses 
Newton polytopes \cite{gao2001}. Other criteria are given in \cite{kaltofen1985, von1985,  HS1981, berlekamp1967, berlekamp1970, LLL1982, kaltofen1982, kaltofen1982IEEE, kaltofen1985SIAM, lenstra1985, weinberger1984, kaltofen1995}. 
However, in proving absolute irreducibility theorems, these algorithms are often not practical. Indeed, the general criterion of Gao is, in general, impractical, as the algorithm requires the indecomposability of polytopes and is known to be an NP-complete problem \cite{gao2001}.

The test of absolute irreducibility that we present here has been very effective in proving substantial parts of the APN conjecture (see \cite{AJDGold, ajd2025evencase}). Our strategy is reminiscent of the absolute irreducibility criteria developed by Janwa and Wilson \cite{JW1993}.
Janwa, McGuire, and Janwa give a highly effective method \cite{JW1993, JMW1995} using Bezout’s theorem and intersection multiplicities. This method has been used to settle the Segre-Bartocci conjecture \cite{HM2012}, and to reduce the general exceptional APN conjecture to exponents in the odd case involving only the Gold and Kasami-Welch cases. The criteria of \cite{JW1993, amw2010} have applications to the exceptional APN conjecture,   to the Segre-Bartocci conjecture \cite{HM2012}, and to the e. Aubry, McGuire, and Rodier \cite{amw2010} reduce such problems in three dimensions to two dimensions by hyperplane intersections for applications to the general exceptional APN function conjecture.

The criterion we present here is an important addition to the proof of the exceptional APN conjecture, among other applications.  One of the conditions to define algebraic geometric codes is that the underlying curve is absolutely irreducible \cite{hoholdt1998algebraic}. In finite geometry, the Segre-Bartocci conjecture was settled by proving that a multivariate polynomial contains an absolutely irreducible factor over the defining field. Absolute irreducibility is fundamental to proving that a permutation polynomial is exceptional

Our previous results are given in \cite{DJ2018, agrinsoniproms}.
We also prove results on the existence of absolutely irreducible factors over the defining field. This factor guarantees a sufficient number of rational points by the bounds of Weil, Deligne, Bombieri, and Lachaud-Ghorpade (see \cite{amw2010, JW1993}), in proving the Segre-Bartocci conjecture \cite{HM2012} and the exceptional planar conjecture \cite{BS2019}, the exceptional APN function conjecture, among others.

The article is organized as follows. In Section \ref{sect: background}, we provide a background. In Section \ref{sect: New results}, we prove the main result of this article that gives an absolute irreducibility criterion (Theorem \ref{thm: heptanomials}). In Section \ref{sect: consequences of the main theorem}, we generalize the concept of degree-gap, which helps us improve our earlier bound on the number of absolutely irreducible factors. As a consequence, we obtain a direct proof of the absolute irreducibility of a class of multivariate polynomials. 
The final section contains the conclusions.

\section{Background}\label{sect: background}
A multivariate polynomial $G(\X)$ in  $\ff(\textbf{X}):=\ff[X_1, \dots, X_n]$ is called absolutely irreducible if it remains irreducible over every extension of $\ff$. Furthermore, in the graded degree homogeneous decomposition (in the decreasing degree order),
$G(\textbf{X})$ decomposes as 
\begin{equation}\label{eqn:graded}
G(\textbf{X}) = G_d(\textbf{X}) + G_{d_1}(\X) \dots + G_{d_m}(\textbf{X}),
\end{equation}

\noindent where $G_{d_i}(\textbf{X})$ is a non-zero homogeneous polynomial of degree $d_i$, $1 \leq i \leq m$, and $d > d_1 > \cdots > d_m$. In the rest of the article, polynomials will be written in decreasing degree homogeneous forms. The homogeneous component $G_{d_m}(\X):= T_G(\textbf{X}) := T(\textbf{X})$ is called the \textbf{tangent cone} of $G$. 

\textcolor{black}{Recently, in \cite{agrinsoniproms} we formalized the techniques used by Aubry, McGuire, and Rodier \cite{amw2010}; Ferard \cite{ferard2017}; and Delgado and Janwa \cite{delgadojanwatranversal, delgado2016, moisesjanwa2018congressnueratum, delgado1mod4,   moisesjanwa2017congressnueratum, delgadojanwagoldcase} to factor a polynomial in which the leading form is square-free. In this formalism, we introduce the concept of the degree-gap of a polynomial as follows. 
}

\begin{definition}\label{defn: degreegap}
 Let $F(\textbf{X}) \in \ff[X_1, \dots, X_n]$ be a polynomial of degree $d$ with at least two terms. We define the \textbf{degree-gap} $\gamma(F)$ as the difference $d-d_1$ from the decomposition in Equation \eqref{eqn:graded}. If $F(\textbf{X})$ is a homogeneous polynomial, then $\gamma(F)$ is defined as infinity.
\end{definition}

\textcolor{black}{
\begin{theorem}[Agrinsoni, Janwa \& Delgado  \cite{agrinsoniproms}]\label{Thm: degree-gap}
   Let $F(\textbf{X})= F_d(\textbf{X}) + H(\textbf{X}) \in \ff[\X]$. If $F_d(\textbf{X})$ is square-free, then every factor of $F(\textbf{X})$ has degree-gap at least  $\gamma(F(\textbf{X}))$.   
\end{theorem}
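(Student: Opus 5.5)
We argue by contradiction, looking at a factorization $F = PQ$ through its homogeneous decompositions. Let $\gamma = \gamma(F)$, so $F = F_d + F_{d-\gamma} + (\text{lower terms})$. If $F$ is homogeneous, any factor of $F$ is homogeneous, and the claim holds with degree-gap infinity. So assume $\gamma$ is finite.

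Take any factorization $F = PQ$ with $P,Q$ nonconstant; it suffices to show $\gamma(P)\ge\gamma$. Write
\[
P = P_r + P_{r-\alpha} + \cdots, \qquad Q = Q_s + Q_{s-\beta} + \cdots,
\]
where $\alpha=\gamma(P)$ and $\beta=\gamma(Q)$ (possibly infinite), and $r+s=d$. Comparing top-degree parts gives $F_d = P_rQ_s$. Since $F_d$ is square-free, $\gcd(P_r,Q_s)=1$.

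Now suppose, for contradiction, that $\min(\alpha,\beta) < \gamma$.
\begin{itemize}
\item If $\alpha \ne \beta$, say $\alpha<\beta$, the degree-$(d-\alpha)$ component of $PQ$ is $P_{r-\alpha}Q_s \neq 0$, because the other candidate products have degree $< d-\alpha$. This contradicts the vanishing of $F$ in degrees strictly between $d-\gamma$ and $d$.
\item If $\alpha=\beta<\gamma$, the degree-$(d-\alpha)$ component is $P_rQ_{s-\alpha}+P_{r-\alpha}Q_s$, and it must vanish. Hence $P_r \mid P_{r-\alpha}Q_s$. Coprimality then gives $P_r \mid P_{r-\alpha}$, which is impossible since $\deg P_{r-\alpha} < \deg P_r$ and $P_{r-\alpha}\neq 0$.
\end{itemize}
So $\min(\gamma(P),\gamma(Q))\ge\gamma$. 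In particular every factor of $F$, and by induction every factor in a full factorization, has degree-gap at least $\gamma$.

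Two points need care. First, the divisibility step uses unique factorization in $\ff[\X]$, or over an extension field if we want the statement for absolute factors. Second, homogeneous factors need handling. If $Q$ is homogeneous, then $\beta=\infty$ and the degree-$(d-\alpha)$ component is simply $P_{r-\alpha}Q_s\neq 0$, which the first case already covers.

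The main obstacle is the equal-gap case, which is exactly where square-freeness of $F_d$ enters through coprimality of $P_r$ and $Q_s$. Without that hypothesis the two cross terms could cancel: for example $F_d=X^2$, $P=X+1$, $Q=X-1$ gives $\gamma(F)=2$ while $\gamma(P)=1$.
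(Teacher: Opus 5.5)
Your proof is correct and follows essentially the same idea the paper relies on: $F_d=P_rQ_s$ with $F_d$ square-free gives $\gcd(P_r,Q_s)=1$, and the vanishing of $F$ in degrees strictly between $d-\gamma$ and $d$ forces a relation $P_rQ_{s-k}=-P_{r-k}Q_s$, which coprimality and a degree count rule out. The only difference is cosmetic: you go straight to the first nonzero lower forms via $\min(\gamma(P),\gamma(Q))$ and split on whether $\gamma(P)=\gamma(Q)$, whereas the paper's argument (e.g.\ the proof of Theorem~\ref{thm: heptanomials} specialized to $m=1$) eliminates $P_{s-k}$ and $Q_{t-k}$ one degree $k$ at a time.
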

\begin{lemma}[Agrinsoni, Janwa \& Delgado \cite{agrinsoniproms}]\label{cor: degreegap bound on the number of factors}
Let $F(\textbf{X}) = F_d(\textbf{X}) + H(\textbf{X}) \in \ff[\X]$.  If $F_d(\textbf{X})$ is square-free and $(F_d, H) =1$, then $F(\textbf{X})$ has at most $\floor*{\frac{\deg(F)}{\gamma(F)}}$ factors. 
\end{lemma}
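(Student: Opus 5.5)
The plan is to bound the number of factors by showing that every nonconstant factor of $F$ has degree at least $\gamma(F)$, and then adding up degrees. Let $d=\deg F$, and suppose $F = G_1 G_2 \cdots G_k$ with each $G_i$ nonconstant; factors are counted with multiplicity, and the factorization may be taken over any extension of $\ff$. Since $\ff$ is finite, hence perfect, a square-free form over $\ff$ stays square-free over the algebraic closure. Similarly, coprimality of $F_d$ and $H$ is preserved, because gcd's are computed over $\ff$. So the hypotheses of Theorem \ref{Thm: degree-gap} still hold after extending the field. Also $H\neq 0$: otherwise $(F_d,H)=F_d$, which is nonconstant, so $\gamma(F)$ is finite.

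The first step is to show that no factor $G_i$ is homogeneous. Suppose $G=G_i$ is homogeneous of degree $e\ge 1$, and write $F = G R$ with $R = R_{top} + R'$, where $R_{top}$ is the leading form of $R$. Multiplying by the homogeneous $G$ preserves the graded decomposition, so comparing top-degree parts gives $F_d = G R_{top}$. The remaining parts give $H = G R'$. Hence $G$ divides both $F_d$ and $H$, which contradicts $(F_d,H)=1$ because $\deg G\ge 1$.

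The second step uses the degree-gap bound. Each $G_i$ is non-homogeneous, so its degree-gap $\gamma(G_i)=\deg G_i - e_1$ is finite, where $e_1\ge 0$ is the degree of its second homogeneous component. Therefore $\gamma(G_i)\le \deg G_i$. By Theorem \ref{Thm: degree-gap}, $\gamma(G_i)\ge \gamma(F)$, so $\deg G_i\ge \gamma(F)$ for every $i$. Summing over the factors gives
\[
d=\sum_{i=1}^k \deg G_i \ge k\,\gamma(F),
\]
so $k\le d/\gamma(F)$. Since $k$ is an integer, $k\le \floor*{\deg(F)/\gamma(F)}$.

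The main point needing care is the first step: the coprimality hypothesis is exactly what excludes homogeneous factors. Without it, a homogeneous factor would have infinite degree-gap and Theorem \ref{Thm: degree-gap} would give no lower bound on its degree. The rest is bookkeeping with graded components.
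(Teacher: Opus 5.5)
Your proof is correct and is essentially the intended argument. The paper only cites this lemma, but it is labeled as a consequence of Theorem~\ref{Thm: degree-gap}, and the paper's generalized factor bound in Section~\ref{sect: consequences of the main theorem} is obtained the same way: bound the degree of each factor below by the gap, then sum the degrees.

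One point is worth keeping. You use the joint hypothesis $(F_d,H)=1$ only to rule out homogeneous factors, and then get $\deg G_i\ge\gamma(G_i)\ge\gamma(F)$; that is the right way to use this hypothesis. The Section~\ref{sect: consequences of the main theorem} version instead needs the stronger pairwise condition $(F_d,F_{d-\gamma_k})=1$ in order to force $P_{s-\gamma_k}\neq 0$.
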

}

\textcolor{black}{In section \ref{sect: consequences of the main theorem}, we generalize the concept of the degree-gap of a multivariate polynomial, by introducing the $i^{th}$-degree-gap of the polynomial. Moreover, in Theorem \ref{thm: generalization of extending the degree-gap} we give sufficient conditions to guarantee that the factors of a polynomial have at least one of the gaps of the same number as the $i^{th}$-degree-gap of the original polynomial. This theorem generalized Theorem \ref{Thm: degree-gap}.
\textcolor{black}{As a direct consequence of this result,  we generalize Lemma \ref{cor: degreegap bound on the number of factors}, which bounds the number of factors of a polynomial with a square-free leading form and without homogeneous factors. }
}

A result on the absolute irreducibility of a special class of trinomials was given by
Beelen and Pellikaan \cite{beelen2000newton}. \textcolor{black}{Recently we \textcolor{black}{investigated} the absolute irreducibility of multivariate polynomials with few forms with square-free leading form see propositions (\ref{prop: 4}, \ref{prop: 5}, and \ref{thm: quadrinomials}). In this article, in Theorem \ref{thm: heptanomials} we give a general criterion to test absolute irreducibility. The propositions below (our earlier results (\ref{prop: 4}, \ref{prop: 5}, and \ref{thm: quadrinomials})) are now a direct consequence of our theorem. }

\textcolor{black}{
\begin{proposition}[Agrinsoni, Janwa \& Delgado \cite{agrinsoniproms}]\label{prop: 4}
Let $F(\textbf{X}) = F_d(\textbf{X}) + F_m(\textbf{X}) \in \ff[X_1, \dots, X_n]$, where $\deg(F) = d$. If $F_d(\textbf{X})$ is square-free and $(F_d, F_m) =1$, then $F(\textbf{X})$ is absolutely irreducible. 
\end{proposition}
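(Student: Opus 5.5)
Suppose $F = AB$ over $\overline{\ff}$ with both factors nonconstant; I aim for a contradiction. Write $A = A_r + \dots + A_{s}$ and $B = B_t + \dots + B_u$ in decreasing homogeneous components, where $A_r, A_s$ are the top and bottom components of $A$, and $B_t, B_u$ those of $B$. Passing to $\overline{\ff}$ is harmless: $F_d$ stays square-free (over the perfect field $\ff$ a square-free form has no repeated factor over the closure), and $(F_d,F_m)=1$ persists over $\overline{\ff}$ because a gcd is unchanged by field extension.

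Comparing top and bottom degrees gives $A_rB_t = F_d$ and $A_sB_u = F_m$, the tangent cone. Here $m$ is the lowest degree, so $F$ has no components strictly between degrees $m$ and $d$.

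First I show neither factor is homogeneous. If $A$ were homogeneous, then $A = A_r$ would divide both $F_d$ and $F_m$. Since $(F_d,F_m)=1$, $A$ would be constant, which is excluded. The same argument applies to $B$.

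Next I use Theorem \ref{Thm: degree-gap}. Every factor of $F$ has degree-gap at least $\gamma(F) = d-m$. Hence $A$ has no components in degrees $r-1,\dots,r-(d-m)+1$, and likewise for $B$. Since $A$ is not homogeneous, its next component after $A_r$ lies in degree at most $r-(d-m)$, so $s \le r-(d-m)$, and similarly $u \le t-(d-m)$. Adding these gives
\[
m = s+u \le r+t-2(d-m) = d - 2(d-m) = 2m-d,
\]
so $d \le m$. This contradicts $m<d$.

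The only delicate step is making sure Theorem \ref{Thm: degree-gap} applies to factors over $\overline{\ff}$ rather than just over $\ff$. I would handle this by applying the theorem over the extension field containing the coefficients of $A$ and $B$, where the square-free hypothesis still holds. If that feels unsafe, I would instead argue directly: compare the first degree below $d$ in $AB$. This is $A_rB_{t'} + A_{r'}B_t$, where $r' < r$ and $t' < t$ are the next occupied degrees of $A$ and $B$. If $r-r' \ne t-t'$, the term of degree $d - \min(r-r',\,t-t')$ is a single product $A_rB_{t'}$ (or $A_{r'}B_t$), which is nonzero. If $r-r' = t-t'$, cancellation forces $A_r B_{t'} = -A_{r'}B_t$; since $(A_r,B_t)=1$ by square-freeness, $A_r$ divides $A_{r'}$, which is impossible by degree. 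Either way a nonzero component of $F$ appears strictly between $m$ and $d$, or the argument pushes down to degree $m$ with the inequality above.
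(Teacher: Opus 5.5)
Your proof is correct and follows essentially the paper's route. The paper obtains this proposition as the case $m=1$ of Theorem \ref{thm: heptanomials}, where the span condition reduces to $\gamma_1 \neq 0$ and so holds automatically. There, coprimality of the top components $P_s, Q_t$ forces every component of each factor strictly between degrees $s$ and $s-\gamma_1$ (resp.\ $t$ and $t-\gamma_1$) to vanish; you get this by citing Theorem \ref{Thm: degree-gap} or re-derive it in your fallback. The paper then finishes with the tangent-cone comparison and the gcd condition excluding a homogeneous factor, which your preliminary non-homogeneity step and your inequality $m \le 2m-d$ make explicit, and somewhat more cleanly than the paper's own last step.
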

}

This proposition provides a complete characterization of generalized binomials when the leading form is square-free.

\textcolor{black}{
\begin{proposition}[Agrinsoni, Janwa \& Delgado \cite{AJDabsirred}]\label{prop: 5}
Let $F(X) = F_d(X) + F_{d-e}(X) + F_a(X) \in \ff[X]$, where $d-e >a$. If $F_d(X)$ is square-free,  $(F_d, $\\ $  F_{d-e}, F_a) =1$, and  $a \neq d- me $ for some $m \in \mathbb{Z}$, then $F(X)$ is absolutely irreducible. 
\end{proposition}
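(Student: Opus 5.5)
The plan is to argue by contradiction from a nontrivial factorization over the algebraic closure. Suppose $F = GH$ with $G,H \in \overline{\ff}[\X]$ both nonconstant. Write
\[
G = G_r + G_{r-1} + \cdots, \qquad H = H_s + H_{s-1} + \cdots
\]
in decreasing homogeneous forms. Comparing top degrees gives $G_rH_s = F_d$. Since $F_d$ is square-free, $G_r$ and $H_s$ are coprime. By Theorem \ref{Thm: degree-gap}, both $G$ and $H$ have degree-gap at least $e$, so $G_{r-i} = H_{s-i} = 0$ for $0<i<e$.

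The first step is a ``residue mod $e$'' bookkeeping argument. I call a form of $G$ (resp.\ $H$) \emph{conforming} if its degree is $\equiv r$ (resp.\ $\equiv s$) mod $e$. Let $t>0$ be minimal such that $G_{r-t}\neq 0$ or $H_{s-t}\neq 0$ with $t \not\equiv 0 \pmod e$. Expand the degree $d-t$ part of $GH$ as $\sum_{i+j=t} G_{r-i}H_{s-j}$. If $i$ and $j$ are both multiples of $e$, then $i+j\neq t$, so every surviving term has $i$ or $j$ non-conforming. By minimality of $t$ this forces $(i,j)=(t,0)$ or $(0,t)$. Hence
\[
F_{d-t} = G_{r-t}H_s + G_rH_{s-t}.
\]
Since $a \neq d-me$, the only form of $F$ whose degree is $\not\equiv d \pmod e$ is $F_a$. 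So either $d-t=a$, or this sum vanishes. If it vanishes, then $G_r \mid G_{r-t}H_s$, and coprimality gives $G_r\mid G_{r-t}$. Degree then forces $G_{r-t}=0$, and symmetrically $H_{s-t}=0$, contradicting the choice of $t$. Therefore $t=d-a$. Every non-conforming form of $G$ and $H$ has offset at least $d-a$, and $F_a = G_{r-t}H_s + G_rH_{s-t}$.

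The second step aims to show that one of the factors is homogeneous. Split $G = G^{c} + G^{n}$ and $H = H^{c}+H^{n}$ into conforming and non-conforming parts. Then $G^cH^c$ collects everything in degrees $\equiv d \pmod e$, except for a possible contribution from $G^nH^n$, which lives in degrees $\le d-2t = 2a-d < a$. The tangent cone of $F$ is $F_a$, in degree $a$. Comparing lowest-degree forms, I would show that $G^nH^n$ contributes nothing and that the conforming part of $GH$ below degree $d-e$ vanishes. From the conforming equation $G^cH^c = F_d + F_{d-e}$, I would run the same top-down argument on degrees $d-ke$, $k\ge 2$. At each step, the coprimality of $G_r$ and $H_s$ forces $G_{r-ke}$ to be divisible by $G_r$, hence zero, and likewise for $H$, unless a nonzero form of $F$ sits in that degree. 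This should pin $G^c$ and $H^c$ down to at most two forms each, linked by $G_{r-e}H_s + G_rH_{s-e} = F_{d-e}$ and $G_{r-e}H_{s-e}=0$.

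The final step, which I expect to be the main obstacle, is turning this rigid shape into a violation of $(F_d,F_{d-e},F_a)=1$. From $G_{r-e}H_{s-e}=0$, one of them vanishes, say $H_{s-e}=0$. Then $G_r \mid F_{d-e}$, and $G_r \mid F_d$ trivially. I would then try to show $G_r\mid F_a$ by pushing the same vanishing argument through the non-conforming forms, giving $H_{s-t}=0$ so that $F_a = G_{r-t}H_s$ and so on. If instead this route makes $H$ homogeneous, then $H=H_s$ divides all three of $F_d$, $F_{d-e}$, $F_a$. Either way the coprimality hypothesis forces the relevant factor to be constant, a contradiction. The delicate point is controlling the mixed products in low degree, near $2a-d$, so that no unexpected cancellation rescues a factorization. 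If needed, I would handle this by an induction from the bottom, using tangent cones: $T_F = F_a = T_GT_H$.
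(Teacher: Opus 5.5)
Your Step 1 is correct. It is the paper's first phase (the proof of Theorem \ref{thm: heptanomials} with $\gamma_1=e$, $\gamma_2=d-a$), done more cleanly with a single minimal non-conforming offset $t$. You should add why such a $t$ exists: otherwise $GH$ would have no form in degree $a\not\equiv d \pmod e$. Step 1 correctly yields $F_a=G_{r-t}H_s+G_rH_{s-t}$ with $t=d-a$.

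The gap is in what follows.
\begin{itemize}
\item The top-down argument in Step 2 for degrees $d-ke$, $k\ge 2$, does not work. Already in degree $d-2e$ the relation is $G_{r-2e}H_s+G_{r-e}H_{s-e}+G_rH_{s-2e}=0$, even granting that $G^nH^n$ contributes nothing. The cross term $G_{r-e}H_{s-e}$ prevents you from concluding $G_r\mid G_{r-2e}$. The coprimality trick only applies when exactly two products survive, which is why it works in Step 1 and not here.
\item The claims that $G^nH^n$ contributes nothing and that $H_{s-t}=0$ are left as things you ``would show'', so they remain unproved.
\item Step 3 has an index slip. If $H_{s-e}=0$ then $F_{d-e}=G_{r-e}H_s$, so it is $H_s$, not $G_r$, that divides $F_{d-e}$. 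Consistently with this, $F_a=G_{r-t}H_s$ would give $H_s\mid F_a$.
\end{itemize}

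The missing idea is the one you list only as a fallback. It closes the proof directly after Step 1, with no conforming/non-conforming decomposition.
\begin{itemize}
\item Since $a$ is the lowest degree occurring in $F$, we have $F_a=T_GT_H$.
\item Write $T_G=G_{r-b'}$ and $T_H=H_{s-c'}$; comparing degrees gives $b'+c'=d-a=t$.
\item If $G_{r-t}\neq 0$, then $b'\ge t$, so $c'=0$ and $H=H_s$ is homogeneous.
\item Then $F=GH_s$ forces $H_s$ to divide $F_d$, $F_{d-e}$ and $F_a$. This contradicts $(F_d,F_{d-e},F_a)=1$, because $H_s$ is nonconstant.
\item The case $H_{s-t}\neq 0$ is symmetric, and both cannot vanish since $F_a\neq 0$.
\end{itemize}
This is exactly how the paper finishes. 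The same degree count would also justify your assertions $G^nH^n=0$ and $H_{s-t}=0$, but once you have it, Steps 2 and 3 are superfluous.
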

}

\textcolor{black}{
\begin{proposition}[Agrinsoni, Janwa \& Delgado \cite{AJDquadrinomials}]\label{thm: quadrinomials}
    Let $F(\X) = F_d(\X) + F_{d-\gamma_1}(\X) + F_{d-\gamma_2}(\X) + F_{d-\gamma_3}(\X) \in \ff[\X]$. Let $F_d(\X)$ be square-free and $(F_d , F_{d-\gamma_1}, $ $ F_{d-\gamma_2} , F_{d-\gamma_3}) =1$. If $\gamma_3 \notin \text{span}_{\mathbb{N}} \{ \gamma_1, \gamma_2\}$, then $F(\X)$  is absolutely irreducible.  
\end{proposition}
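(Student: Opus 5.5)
Suppose, for contradiction, that $F = GH$ over $\overline{\ff}$ with both factors nonconstant. Write each factor in decreasing homogeneous forms:
\[
G = G_r + G_{r-\alpha_1} + \cdots, \qquad H = H_s + H_{s-\beta_1} + \cdots, \qquad r+s=d .
\]
Then $F_d = G_rH_s$. Since $F_d$ is square-free, $G_r$ and $H_s$ are coprime.

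The plan is to compare homogeneous components of $F=GH$ degree by degree, from the top down. The goal is to show that every degree gap appearing in $G$ or $H$ lies in $\text{span}_{\n}\{\gamma_1,\gamma_2\}$, and that $F_{d-\gamma_3}$ then forces a contradiction. The coprimality condition $(F_d, F_{d-\gamma_1}, F_{d-\gamma_2}, F_{d-\gamma_3})=1$ is what rules out degenerate factorizations, for example one factor being homogeneous and dividing all forms of $F$. I would first dispose of the case where $G$ or $H$ is homogeneous. If, say, $G=G_r$ is homogeneous, then $G$ divides every form of $F$, hence divides their gcd, which is $1$. So $G$ is constant, a contradiction. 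We may therefore assume both factors have finite degree-gap.

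The first step is Theorem~\ref{Thm: degree-gap}, which gives that $G$ and $H$ both have degree-gap at least $\gamma_1$. The degree $d-\gamma_1$ component is then
\[
F_{d-\gamma_1} = G_r H_{s-\gamma_1} + G_{r-\gamma_1} H_s ,
\]
with the convention that a missing form is zero. The key inductive step is the following. Let $\delta$ be the smallest gap occurring in $G$ or $H$ that does not lie in $\text{span}_{\n}\{\gamma_1,\gamma_2\}$. Look at the degree $d-\delta$ component of $GH$. It contains the terms $G_rH_{s-\delta}+G_{r-\delta}H_s$. Every other term $G_{r-a}H_{s-b}$ with $a+b=\delta$ and $a,b>0$ has $a,b<\delta$, so $a,b\in\text{span}_{\n}\{\gamma_1,\gamma_2\}$, and hence $\delta=a+b$ is in the span, which is impossible. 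So these other terms vanish. If $\delta\neq\gamma_3$, then $F$ has no component in degree $d-\delta$. Hence $G_rH_{s-\delta}=-G_{r-\delta}H_s$, and coprimality gives $G_r\mid G_{r-\delta}$. Degree comparison, since $\deg G_{r-\delta}<\deg G_r$, forces $G_{r-\delta}=0$, and symmetrically $H_{s-\delta}=0$. This contradicts the choice of $\delta$. Consequently the only gap outside the span that can occur is $\delta=\gamma_3$. Gaps larger than $\gamma_3$ that are not in the span need the same argument, except that now the cross terms may involve $\gamma_3$. I would handle this by strong induction on the gap, tracking the set $\text{span}_{\n}\{\gamma_1,\gamma_2,\gamma_3\}$ instead.

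Now I would treat $\delta=\gamma_3$. We have $F_{d-\gamma_3}=G_rH_{s-\gamma_3}+G_{r-\gamma_3}H_s\neq 0$. The main obstacle is to derive a contradiction from this. I expect to work from the bottom instead, using the lowest forms (tangent cones): $T_F=T_GT_H$ and $T_F=F_{d-\gamma_3}$. The degree of $T_G$ is $r$ minus a sum of gaps of $G$, and likewise for $H$. I would aim to show that the lowest forms of $G$ and $H$ are reached through gaps in the span of $\gamma_1,\gamma_2$ alone, except for at most one factor that uses $\gamma_3$ exactly once. Then $d-\gamma_3=\deg T_G+\deg T_H$ rearranges to $\gamma_3\in\text{span}_{\n}\{\gamma_1,\gamma_2\}$, or to a degenerate configuration that the gcd condition excludes. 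The delicate part is controlling the mixed terms in intermediate degrees $d-(\gamma_3+k\gamma_1+l\gamma_2)$. There the product must vanish, because $F$ has no forms below $d-\gamma_3$. I would apply the coprimality trick again: the top-and-bottom pair $G_r$, $H_s$ peels off the highest surviving terms one at a time. As a last resort, I would fall back on a top-to-bottom symmetric argument, comparing the highest forms of $G-G_r$ and $H-H_s$ with the lowest forms.
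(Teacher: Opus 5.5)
Your first half is sound and matches the paper's proof of Theorem~\ref{thm: heptanomials} with $m=3$; in fact it is cleaner. Take the least $\delta\notin S:=\text{span}_{\n}\{\gamma_1,\gamma_2\}$ with $G_{r-\delta}\neq 0$ or $H_{s-\delta}\neq 0$. By minimality, every nonzero cross term $G_{r-a}H_{s-b}$ would have $a,b\in S$; that is the correct reading of your ``$a,b<\delta$, so $a,b\in S$''. Coprimality of $G_r,H_s$ then kills the two extreme terms whenever $F_{d-\delta}=0$. This gives $G_{r-k}=H_{s-k}=0$ for $0<k<\gamma_3$, $k\notin S$, and
\[
F_{d-\gamma_3}=G_rH_{s-\gamma_3}+G_{r-\gamma_3}H_s .
\]

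The genuine gap is the final step. You flag it yourself as ``the main obstacle'' and then leave only intentions. The claim that the lowest forms are reached through gaps in $S$ except for a single use of $\gamma_3$ is never proved. The strong induction over $\text{span}_{\n}\{\gamma_1,\gamma_2,\gamma_3\}$ and the ``last resort'' are not arguments. As written, no contradiction is derived.

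The missing idea is a one-line degree count on tangent cones, which is how the paper finishes. Because $F_{d-\gamma_3}$ is the lowest form of $F$, you have $\deg T_G+\deg T_H=d-\gamma_3=r+s-\gamma_3$, with $\deg T_G\le r$ and $\deg T_H\le s$. Since $F_{d-\gamma_3}\neq 0$, one of $G_{r-\gamma_3}$, $H_{s-\gamma_3}$ is nonzero, say $G_{r-\gamma_3}$. Then $\deg T_G\le r-\gamma_3$, which forces $\deg T_H=s$, so $H=H_s$ is homogeneous. You already excluded that case using $(F_d,F_{d-\gamma_1},F_{d-\gamma_2},F_{d-\gamma_3})=1$.

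The same count shows $G_{r-a}=0$ and $H_{s-b}=0$ for all $a,b>\gamma_3$. So the gaps larger than $\gamma_3$, and the mixed terms in degrees $d-(\gamma_3+k\gamma_1+l\gamma_2)$ that you planned to control, simply do not occur. No further induction is needed.
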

}

 In this article, we show that many more multivariate polynomials with square-free leading form are absolutely irreducible. The following result shows that our criterion is valid for almost alll multivariate polynomials.

\begin{proposition}[Gao and Lauder \cite{gao2001square-free}]\label{proposition: Gao-Lauder}
 Most multivariate polynomials are square-free. 
\end{proposition}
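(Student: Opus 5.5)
This statement, Proposition \ref{proposition: Gao-Lauder}, is a counting statement over a finite field $\ff=\ff_q$. I would make it precise as follows: for fixed $n$ and $d$, the proportion of square-free polynomials among all polynomials of degree $d$ in $\ff_q[X_1,\dots,X_n]$ tends to $1$ as $q\to\infty$. It also tends to a positive constant, close to $1-1/q$, as $d\to\infty$.

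The approach is to bound the non-square-free polynomials directly. A polynomial $F$ of degree $d$ fails to be square-free exactly when $F=G^2H$ for some nonconstant $G$, and we may take $G$ irreducible. Let $N(k)$ be the number of polynomials of degree at most $k$ in $n$ variables, so $N(k)=q^{\binom{n+k}{n}}$. The map $(G,H)\mapsto G^2H$, with $\deg G=e\ge 1$ and $\deg H=d-2e$, covers every non-square-free $F$. This gives the union bound
\[
\#\{F \text{ not square-free}\}\le \sum_{e=1}^{\lfloor d/2\rfloor} N(e)\,N(d-2e).
\]
Since $\binom{n+e}{n}+\binom{n+d-2e}{n}$ is far smaller than $\binom{n+d}{n}$ whenever $n\ge 2$, each term is a tiny fraction of $N(d)$. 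Summing, the proportion of non-square-free polynomials is $O(q^{-c})$ with $c\ge 1$, and hence tends to $0$. The key step is this exponent comparison, which holds because the binomial coefficient is convex in its degree argument.

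The case $e=1$ dominates the sum. To obtain the sharper statement as $d\to\infty$, I would instead use inclusion–exclusion over squares of irreducibles. In one variable the generating-function identity gives the exact density $1-1/q$. For $n\ge 2$ the main obstacle is controlling the overlaps between $G_1^2\mid F$ and $G_2^2\mid F$. I would handle these by the same exponent comparison, or by specializing to lines to reduce to the univariate count.

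For the application to Theorem \ref{thm: heptanomials}, the same union-bound argument applies verbatim to homogeneous forms of degree $d$, with $N(k)$ replaced by $q^{\binom{n+k-1}{n-1}}$. This shows that the leading form $F_d$ is square-free for almost all $F$.
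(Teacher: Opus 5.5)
The paper does not prove this proposition. It quotes it from Gao and Lauder and uses it only to justify the phrase ``almost all'' after Theorem \ref{thm: heptanomials}. Your union-bound count is therefore an independent, elementary route, and its core is correct. For $n\ge2$ the function $e\mapsto\binom{n+e}{n}+\binom{n+d-2e}{n}$ is convex, so the exponent deficit $\binom{n+d}{n}-\binom{n+e}{n}-\binom{n+d-2e}{n}$ is minimized at $e=1$ or $e=\lfloor d/2\rfloor$. At both endpoints the deficit is at least $2$; at $e=1$ it equals $\binom{n+d-1}{n-1}+\binom{n+d-2}{n-1}-(n+1)$, which is $2d-2$ when $n=2$. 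Hence the non-square-free proportion is $O(d\,q^{-2})$, which proves your $q\to\infty$ formulation. Since $\ff_q$ is perfect, square-free over $\ff_q$ coincides with square-free over $\overline{\ff}_q$, which is what the proof of Theorem \ref{thm: heptanomials} actually needs.

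Two of your side claims are wrong, however.

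First, for $n\ge2$ the density does not tend to a constant near $1-1/q$ as $d\to\infty$, and your own bound refutes this. Both endpoint deficits grow with $d$; for $n=2$ the minimum over $1\le e\le\lfloor d/2\rfloor$ is exactly $2d-2$. So the non-square-free proportion is $O(d\,q^{-(2d-2)})$ and the density tends to $1$. The value $1-1/q$ is a univariate phenomenon. In one variable, $G^2\mid F$ with $\deg G=e$ costs only $2e$ linear conditions. In $n\ge2$ variables, even a linear $G$ costs $\binom{n+d}{n}-\binom{n+d-2}{n}$ conditions, which grows with $d$. Your planned inclusion--exclusion is aimed at a false target and should be dropped.

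Second, the transfer to leading forms is not verbatim when $n=2$, which is exactly the plane-curve case the paper stresses in its conclusion. Binary forms of degree $k$ number $q^{k+1}$, so at $e=1$ the exponents $2+(d-1)$ and $d+1$ coincide and the union bound is vacuous. Counting $G$ up to scalars rescues the $q\to\infty$ statement. But for fixed $q$ the proportion of square-free binary forms of degree $d\ge3$ is about $(1-q^{-1})(1-q^{-2})$, bounded away from $1$, because binary forms dehomogenize to univariate polynomials. For $n\ge3$ your homogeneous argument works as written, since forms in $n$ variables behave like polynomials in $n-1\ge2$ variables.
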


\section{New results}\label{sect: New results}

For the rest of this article, we assume that $0 \in \n$.

\begin{theorem}\label{thm: heptanomials}
    Let $F(\X) = F_d(\X) + F_{d-\gamma_1}(\X) + \dots +F_{d-\gamma_m}(\X) \in \ff[\X]$. Let $F_d(\X)$ be square-free and $(F_d , F_{d-\gamma_1} , \dots ,F_{d-\gamma_m}) =1$. If $\gamma_m \notin \spann$, then $F(\X)$  is absolutely irreducible.  
\end{theorem}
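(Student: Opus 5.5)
Suppose, for a contradiction, that $F = GH$ over $\overline{\ff}$ with $\deg G, \deg H \ge 1$. I would expand both factors in decreasing homogeneous forms,
$$G = G_r + G_{r-\alpha_1} + \cdots, \qquad H = H_s + H_{s-\beta_1} + \cdots, \qquad r+s=d .$$
Then $F_d = G_r H_s$. Since $F_d$ is square-free, $G_r$ and $H_s$ are coprime.

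The key claim is the following. Every degree $d-k$ at which $G$ or $H$ has a nonzero form satisfies $k \in \spann$, provided $k < \gamma_m$.

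I would prove this by strong induction on $k$, peeling off the product one level at a time, as in the proof of Theorem \ref{Thm: degree-gap}. Let $k$ be the smallest gap appearing in $G$ or $H$ that is not in $\spann$. Compare the forms of degree $d-k$ in $F = GH$:
$$F_{d-k} = G_r H_{s-k} + G_{r-k} H_s + \sum G_{r-i} H_{s-j},$$
where the sum runs over $i+j=k$ with $i,j>0$. By minimality, every $i,j$ occurring in the sum lies in $\spann$, and then so does $i+j=k$, which is a contradiction. Hence the sum is empty. Since $k\notin\spann$ and $k<\gamma_m$, we also have $F_{d-k}=0$. This gives $G_r H_{s-k} = -G_{r-k} H_s$. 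By coprimality, $G_r \mid G_{r-k}$, and comparing degrees forces $G_{r-k}=H_{s-k}=0$. This contradicts the choice of $k$. The claim follows.

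Now look at degree $d-\gamma_m$. The same computation gives
$$F_{d-\gamma_m} = G_r H_{s-\gamma_m} + G_{r-\gamma_m} H_s + (\text{cross terms}).$$
The cross terms have gaps $i+j=\gamma_m$ with $i,j>0$ and $i,j<\gamma_m$. By the claim, $i,j\in\spann$, so $\gamma_m\in\spann$, which is false. So the cross terms vanish. Moreover, $\gamma_m$ cannot itself be a gap of $G$ or $H$ unless it appears directly through $G_{r-\gamma_m}$ or $H_{s-\gamma_m}$, which is exactly what happens here.

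Next I would run the same induction for all $k > \gamma_m$ as well. For this I intend to use that the gaps of $G$ and $H$ lie in $\spann \cup (\gamma_m + \text{stuff})$. The goal is a structural statement: write $G = G^{(0)} + G^{(1)}$, where $G^{(0)}$ collects the forms whose gaps lie in $\spann$. Then $G^{(0)}H^{(0)}$ must reproduce $F_d + \cdots + F_{d-\gamma_{m-1}}$ up to the appropriate degrees.

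The decisive step is to show that $G_r$ and $H_s$ both divide $F_{d-\gamma_m}$, or otherwise that one of them divides every $F_{d-\gamma_i}$. In the relation $F_{d-\gamma_m} = G_r H_{s-\gamma_m} + G_{r-\gamma_m} H_s$, this alone is not enough. I would therefore combine it with the fact that every other $F_{d-\gamma_i}$ is built from forms with gaps in $\spann$.

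The cleanest route I would try is a lemma: if $G$ has no gap outside $\spann$ before $\gamma_m$, then every form of $G$ of gap $k \in \spann$ is divisible by $G_r$ or is determined by $G_r$. By induction, the relations force $G_r \mid G_{r-k}$ and $H_s \mid H_{s-k}$ at each level where the cross terms cancel. The aim is to reach a common factor $G_r$ (say) of all the $F_{d-\gamma_i}$, contradicting $(F_d, F_{d-\gamma_1}, \dots, F_{d-\gamma_m}) = 1$. In the binomial case (Proposition \ref{prop: 4}) this is immediate: $F_d = G_rH_s$, and both sides vanish until degree $d-\gamma_1$, where $G_r H_{s-\gamma_1} + G_{r-\gamma_1}H_s = F_{d-\gamma_1}$.

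The main obstacle is this final divisibility and common-factor step in the general multi-form situation, where genuine cross terms $G_{r-i}H_{s-j}$ with $i,j\in\spann$ occur. To handle it, I would first try to keep track of only the lowest-degree nonvanishing form among the pieces and argue via the tangent cone. Failing that, I would induct on $m$, using Proposition \ref{thm: quadrinomials} as a model.
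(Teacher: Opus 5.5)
Your first two steps are correct and match the paper. The strong induction gives $G_{r-k}=H_{s-k}=0$ for every $0<k<\gamma_m$ with $k\notin\spann$. It follows that all cross terms vanish in degree $d-\gamma_m$, leaving
$$F_{d-\gamma_m}=G_rH_{s-\gamma_m}+G_{r-\gamma_m}H_s.$$
The argument stops there, however. What follows is a list of tentative strategies, none carried out:
\begin{itemize}
\item extending the induction past $\gamma_m$;
\item an unsupported lemma that forms of $G$ with gaps in the span are divisible by, or determined by, $G_r$;
\item an induction on $m$.
\end{itemize}
You yourself flag the final step as the main obstacle. As written, no contradiction is reached, so the proof has a genuine gap.

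The missing idea is to use where $F_{d-\gamma_m}$ sits. Since $\gamma_m$ is the largest gap, $F_{d-\gamma_m}$ is the lowest-degree form (tangent cone) of $F$. The lowest-degree form of a product is the product of the lowest-degree forms, so $F_{d-\gamma_m}=T_GT_H$ with $\deg T_G+\deg T_H=r+s-\gamma_m$.

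Because $F_{d-\gamma_m}\neq 0$, your identity forces $G_{r-\gamma_m}\neq 0$ or $H_{s-\gamma_m}\neq 0$. Suppose $H_{s-\gamma_m}\neq 0$. Then $\deg T_H\le s-\gamma_m$, hence $\deg T_G\ge r$, so $G=G_r$ is homogeneous of degree $r\ge 1$. Every homogeneous component of $F=G_rH$ then has the form $F_{d-\gamma_i}=G_rH_{s-\gamma_i}$. So $G_r$ is a nonconstant common divisor of $F_d,F_{d-\gamma_1},\dots,F_{d-\gamma_m}$, contradicting the gcd hypothesis; the gcd does not change on passing to $\overline{\ff}$. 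The case $G_{r-\gamma_m}\neq 0$ is symmetric and makes $H$ homogeneous.

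This is exactly how the paper concludes. Your two-term identity, which you judged ``not enough'', suffices once combined with the tangent-cone property. Nothing about degrees below $d-\gamma_m$, divisibility of the intermediate forms by $G_r$, or induction on $m$ is needed.
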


\begin{proof}
    Assume \textcolor{black}{by way contradiction} that 

\begin{equation}\label{eqn: factorization pq}
    F = (P_s + P_{s-1} + \dots + P_0)(Q_t + Q_{t-1}+ \dots + Q_0),
\end{equation}
where $P_{i}$ (resp. $Q_j$) is either a form of degree $i$ (resp. $j$) or zero. Now  
\begin{equation}\label{eqn: first cone quadrinomial}
 F_d = P_sQ_t.
\end{equation}

{\bf Let }$k \notin \spann$ {\bf and $0 < k < \gamma_m$.} 

Equating the terms of degree $d-k$, in the expansion of Equation \eqref{eqn: factorization pq}, we obtain the following.
\begin{equation*}
    F_{d-k} = 0 = \sum_{i=0}^k P_{s-i}Q_{t-k + i} =
\end{equation*}
 \begin{equation} \label{eqn: term of degree d-k}
     P_sQ_{t-k} + \sum_{\mathclap{i \notin \spann, \atop 0< i<k}} P_{s-i}Q_{t-k+i} + P_{s-k}Q_t + \sum_{\mathclap{i \in \spann, \atop 0< i<k}} P_{s-i}Q_{t-k+i}.
 \end{equation}
\textbf{Claim:} Equation \eqref{eqn: term of degree d-k} can be rewritten as 
\begin{equation}\label{eqn: term of degree d-k refinement}
     F_{d-k} = 0 = P_sQ_{t-k} + P_{s-k}Q_t,
\end{equation}
for all $0 < k < \gamma_m$. 

For the proof of the claim, it is enough to show that

\begin{equation}\label{eqn: first sum}
    \sum_{\mathclap{i \notin \spann, \atop 0< i<k}} P_{s-i}Q_{t-k+i} = 0,
\end{equation}
and 
\begin{equation}\label{eqn: second sum}
    \sum_{\mathclap{i \in \spann, \atop 0< i<k}} P_{s-i}Q_{t-k+i}=0.
\end{equation}
\vspace{.1in}

\textbf{Part I:} {\it
\textcolor{black}{
For the sake of contradiction, assume that equality in  Equation \eqref{eqn: first sum} does not hold, 
i.e., it 
is  $\neq 0$.}
}

\vspace{.1in}
Let $A = \{ j \mid P_{s-j} \neq 0, j \notin \spann, \ 0<j<k\}$. By our assumption, $A \neq \emptyset$. 
Then, according to the well-ordering principle, there exists a minimal element $a \in A$. 
Equating the terms of degree $d-a$, we obtain:
\begin{equation*}
    F_{d-a} = 0 = \sum_{i=0}^a P_{s-i}Q_{t-a+i} =
\end{equation*}
\begin{equation}\label{eqn: term of degree d-a}
    P_sQ_{t-a} + \sum_{\mathclap{i \notin \spann,\atop 0< i <a }} P_{s-i} Q_{t-a+i} +  P_{s-a}Q_t +\sum_{\mathclap{i \in \spann,\atop 0< i <a }} P_{s-i} Q_{t-a+i} . 
\end{equation}
By the minimality of $a$, $\sum_{i \notin \spann, \ 0< i <a } P_{s-i} Q_{t-a+i} = 0$, since $P_{s-i} = 0$ for all $i \in \{ j \mid j \notin \spann, \  0< j <a\}$. 

\textbf{Claim:}
\begin{equation}\label{eqn: second sum of the term d-a}
\sum_{\mathclap{i \in \spann},\atop 0< i <a } P_{s-i} Q_{t-a+i} = 0.    
\end{equation}

Assume, for the sake of contradiction, that the equality in Equation \eqref{eqn: second sum of the term d-a} does not hold.
Let $B = \{ j \mid Q_{t-a+j} \neq 0, j \in \spann \ 0 < j<a\}$. By assumption $B \neq \emptyset$. Then, since $B$ is a finite set, there exists a maximal element $b \in B$. Note that $a-b \notin \spann$. Equating the terms of degree $d-a+b$, we obtain:
\begin{equation*}
     F_{d-a+b} = 0 = \sum_{i=0}^{a-b} P_{s-i}Q_{t-a+b+i} =
\end{equation*}
\begin{equation}\label{eqn: term of degree d-b}
    P_sQ_{t-a+b} + \sum_{\mathclap{i \notin \spann,\atop 0< i <a-b }} P_{s-i} Q_{t-a+b+i} + P_{s-a+b}Q_t + \sum_{\mathclap{i \in \spann,\atop 0< i <a-b}} P_{s-i} Q_{t-a+b+i}. 
\end{equation}
Observe that 
\begin{enumerate}
    \item $\sum_{i \notin \spann,\ 0< i <a-b } P_{s-i} Q_{t-b+i} =0$, as $P_{s-i} = 0$ for all $i \in \{j \mid j \notin \spann,\ 0< j <a-b\}$, by the minimality of $a$, 
    \item $\sum_{i \in \spann, \ 0< i <a-b } P_{s-i} Q_{t-a+b+i} =0$, as $Q_{t-a+b+i} = 0$ for all $i \in \{ j \mid j  \in \spann,\  0< j <a-b \}$ by the maximality of $b$. 
\end{enumerate}
Therefore, Equation \eqref{eqn: term of degree d-b} can be rewritten as follows
\begin{equation*}
    F_{d-a+b} = 0 = P_sQ_{t-a+b} + P_{s-a+b}Q_t.
\end{equation*}
This implies $P_{s}Q_{t-a+b} = -P_{s-a+b}Q_t$. Hence, $P_s \mid P_{s-a+b}Q_t$ and $Q_t \mid P_sQ_{t-a+b}$. By Equation \eqref{eqn: first cone quadrinomial} and the square-freeness of  $F_d(\textbf{X})$, we have $(P_s, Q_t)=1$.  Therefore, two  divisibilities imply that $P_{s} \mid P_{s-a+b}$, and $Q_t \mid Q_{t-a+b}$. Thus, $P_{s-a+b} = Q_{t-a+b}=0$. This is a contradiction. Therefore, Equation \eqref{eqn: second sum of the term d-a} is zero.   

Now Equation \eqref{eqn: term of degree d-a} can be rewritten as
\begin{equation*}
    F_{d-a} = P_sQ_{t-a} + P_{s-a}Q_t.
\end{equation*}
This implies $P_{s}Q_{t-a} = -P_{s-a}Q_t$. Therefore, $P_{s-a} = Q_{t-a} =0$. This is a contradiction. Hence, Equation \eqref{eqn: first sum} is equal to zero. 

\vspace{.1in}

\textbf{Part II.} \textit{Assume, for the sake of contradiction, that the equality in Equation \eqref{eqn: second sum} is not valid.} 

\vspace{.1in}

Let $C = \{ j \mid Q_{t-k+j} \neq 0, j \in \spann, \ 0<j<k\}$. By our assumption $C \neq \emptyset$. Then, since $C$ is a finite set, there exists a maximal element $c \in C$. 
Equating the terms of degree $d- k+c$ we obtain:

\begin{equation*}
     F_{d-k+c} = 0 = \sum_{i=0}^{k-c} P_{s-i}Q_{t-k+c+i} =
\end{equation*}
\begin{equation}\label{eqn: term of degree d-k+c}
    P_sQ_{t-k+c} +  \sum_{\mathclap{i \notin \spann,\atop 0< i <k-c }}  P_{s-i} Q_{t-k+c+i} + P_{s-k+c}Q_t + \sum_{\mathclap{i \in \spann,\atop 0< i <b }} P_{s-i} Q_{t-k+c+i}. 
\end{equation}

Observe that
\begin{enumerate}
    \item $\sum_{i \notin \spann,\ 0< i <k-c } P_{s-i} Q_{t-k+c+i} = 0$ as $P_{s-i}=0$ for all $i \in \{j \mid j \notin \spann,\ 0< j <k-c \}$ by the proof of Equation \eqref{eqn: first sum}, and
    \item  $\sum_{i \in \spann,\ 0< i <b } P_{s-i} Q_{t-k+c+i} = 0$, as $Q_{t-k+i} = 0$ for all $i \in \{j \mid j \in \spann,\ 0< j <b \}$ by the maximality of $c$. 
\end{enumerate}
Therefore, Equation \eqref{eqn: term of degree d-k+c} can be rewritten as 

\begin{equation*}
    F_{d-k+c} = 0 = P_sQ_{t-k+c} + P_{s-k+c}Q_t.
\end{equation*}

This implies $P_{s}Q_{t-k+c} = -P_{s-k+c}Q_t$. Therefore, $P_{s-k+c} = Q_{t-k+c} =0$. This is a contradiction. Hence, Equation \eqref{eqn: second sum} is equal to zero. Therefore, Equation \eqref{eqn: term of degree d-k refinement} is valid. This implies $P_{s}Q_{t-k} = -P_{s-k}Q_t$. 

{\bf Therefore, $P_{s-k} = Q_{t-k} =0$ for all} $k \notin \spann$
and {\bf $0 < k < \gamma_m$.}

Therefore, equating the forms of degree $d-\gamma_m$, we obtain 

\begin{equation*}
 F_{d-\gamma_m} = \sum_{i=0}^{d-\gamma_m} P_{s-i}Q_{t-\gamma_m+i} = 
\end{equation*}
\begin{equation}\label{eqn: trinomial m=k}
P_{s}Q_{t-\gamma_m} +\sum_{\mathclap{i \notin \spann,\atop 0< i < \gamma_m }} P_{s-i}Q_{t-\gamma_m+i} + P_{s-\gamma_m}Q_{t} + \sum_{\mathclap{i \in \spann,\atop 0< i <\gamma_m }}  P_{s-i}Q_{t-\gamma_m+i}  .
\end{equation}
Now $ \sum_{i \notin \spann,\ 0< i < \gamma_m }P_{s-i}Q_{t-\gamma_m+i} =0$, since $P_{s-i} =0$ for every $i \in \{j \mid j \notin \spann,\ 0< j < \gamma_m\}$ and \\ $\sum_{i \in \spann,\ 0< i <\gamma_m }P_{s-i}Q_{t-\gamma_m+i} =0$, since $Q_{t-\gamma_m+i} =0$ for every $i \in \{j \mid j \in \spann,\ 0< j <\gamma_m\}$.  
Therefore, Equation \eqref{eqn: trinomial m=k} can be simplified as 

\begin{equation}\label{eqn: tangent cone trinomial binomial sum}
     F_{d-\gamma_m} = P_{s}Q_{t-\gamma_m} + P_{s-\gamma_m}Q_{t}.
\end{equation}

The tangent cone of $F(\textbf{X})$ is the product of the tangent cone of $P$ and $Q$ (as the lowest degree form of a polynomial is the product of the lowest degree forms of its factors). Therefore, there exists $b',c'$ such that 

\begin{equation}\label{eqn: tangent cone trinomial}
 F_{d-\gamma_m} = P_{s-b'} Q_{t-c'}, 
\end{equation}
and $P_{s-i} = 0$ for $i > b'$ and $Q_{t-j} = 0$ for $j > c'$.\\

Since $F_{d-\gamma_m}(\X)$ is the tangent cone of $F(\textbf{X})$, it is the product of the tangent cones of $P$ and $Q$. Therefore, $P_{s-\gamma_m} =0$ or $Q_{t-\gamma_m} =0$. If $P_{s-\gamma_m} =0$, then by Equation \eqref{eqn: tangent cone trinomial binomial sum}, the tangent cone of $P(\textbf{X})$ is $P_{s}$. This contradicts $(F_d , F_{d-\gamma_1} ,\dots ,F_{d-\gamma_m}) =1$. Similarly, if $Q_{t-\gamma_m} =0$, then by Equation \eqref{eqn: tangent cone trinomial binomial sum}, the tangent cone of $Q(\textbf{X})$ is $Q_{t}$. This contradicts $(F_d , F_{d-\gamma_1} ,\dots,F_{d-\gamma_m}) =1$.
\end{proof}

\textcolor{black}{As a direct consequence of the proof of this theorem, we conclude the following lemmas.
\begin{lemma}
    Let $F(\X) = F_d(\X) + F_{d-\gamma_1}(\X) + \dots +F_{d-\gamma_m}(\X) \in \ff[\X]$. Let $F_d(\X)$ be square-free and $(F_d , F_{d-\gamma_1} , \dots ,F_{d-\gamma_m}) =1$. If $\gamma_i \notin \text{span}_{\n}\{ \gamma_1, \gamma_2, \dots, \gamma_{i-1}\}$ and $F(\X) = (P_s+P_{s-1}+ \dots+P_0)(Q_t+Q_{t-1}+ \dots+ Q_0)$, then $F_{d-\gamma_i} = P_sQ_{t-\gamma_i}+ P_{s-\gamma_i}Q_t$  
\end{lemma}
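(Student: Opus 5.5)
The plan is to rerun the argument of Theorem \ref{thm: heptanomials} with $\gamma_m$ replaced by $\gamma_i$, and with $S_i:=\text{span}_{\n}\{\gamma_1,\dots,\gamma_{i-1}\}$ in place of the span used there. Throughout we write $F = PQ$ with $P=P_s+\dots+P_0$ and $Q=Q_t+\dots+Q_0$. Since $F_d=P_sQ_t$ and $F_d$ is square-free, we have $(P_s,Q_t)=1$. This gives the basic \emph{two-term principle}: if for some $0<k$ the degree $d-k$ component reduces to $F_{d-k}=P_sQ_{t-k}+P_{s-k}Q_t$ with $F_{d-k}=0$, then $P_s\mid P_{s-k}$ and $Q_t\mid Q_{t-k}$. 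Because these are forms of smaller degree, this forces $P_{s-k}=Q_{t-k}=0$.

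\textbf{Vanishing step.} First show that $P_{s-k}=Q_{t-k}=0$ for every $0<k<\gamma_i$ with $k\notin S_i$. For such $k$, every $\gamma_j$ with $j\ge i$ exceeds $k$, since $\gamma_i<\gamma_{i+1}<\cdots$. Also $k$ is not any $\gamma_j$ with $j<i$, because those lie in $S_i$. Hence $F_{d-k}=0$. We then repeat Parts I and II of the proof of Theorem \ref{thm: heptanomials} verbatim with $S_i$ in place of the full span, so that the cross terms split according to whether the index lies in $S_i$ or not.

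In Part I, let $a$ be the minimal $j\notin S_i$, $0<j<k$, with $P_{s-j}\ne 0$. Take $b$ maximal in $S_i$ with $Q_{t-a+b}\neq 0$. Then $a-b\notin S_i$, since $S_i$ is closed under addition, and $a-b<\gamma_i$, so $F_{d-a+b}=0$. The two-term principle now gives a contradiction. Part II works the same way with a maximal $c$. We conclude that $P_{s-j}=0$ for all $j\notin S_i$ and $Q_{t-j}=0$ for all $j\in S_i$ in the relevant ranges, and then that both vanish for $k\notin S_i$.

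\textbf{Expansion at $\gamma_i$.} Next expand the degree $d-\gamma_i$ component:
\[
F_{d-\gamma_i}=P_sQ_{t-\gamma_i}+P_{s-\gamma_i}Q_t+\sum_{0<j<\gamma_i}P_{s-j}Q_{t-\gamma_i+j}.
\]
Each cross term vanishes. If $j\notin S_i$, then $P_{s-j}=0$ by the vanishing step. If $j\in S_i$, then $\gamma_i-j\notin S_i$, because otherwise $\gamma_i\in S_i$. Since $0<\gamma_i-j<\gamma_i$, the vanishing step gives $Q_{t-(\gamma_i-j)}=0$. This yields $F_{d-\gamma_i}=P_sQ_{t-\gamma_i}+P_{s-\gamma_i}Q_t$.

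\textbf{Main obstacle.} The hardest part is the bookkeeping in the vanishing step. The argument there uses only that the intermediate forms $F_{d-j}$ vanish for $j<\gamma_i$, $j\notin S_i$. It never uses the nonvanishing components $F_{d-\gamma_j}$ with $j<i$, which sit at indices in $S_i$. For the $Q$-vanishing claims on $S_i$, we need the maximality arguments to land only on degrees $d-k'$ with $k'\notin S_i$, where $F_{d-k'}=0$. The complement identity, that $j\in S_i$ and $k\notin S_i$ imply $k-j\notin S_i$, guarantees this. So the key check is that every application of the two-term principle lands on a degree $k'\notin S_i$ with $k'<\gamma_i$.
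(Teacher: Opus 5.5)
Your proposal is correct and follows the paper's route: the paper obtains this lemma directly from the proof of Theorem \ref{thm: heptanomials} with $\gamma_m$ replaced by $\gamma_i$. Your key observation is exactly what licenses that replacement, namely that $F_{d-k}=0$ for every $0<k<\gamma_i$ with $k\notin \text{span}_{\n}\{\gamma_1,\dots,\gamma_{i-1}\}$, which is the only property of the forms that Parts I and II use. One wording slip: Part II yields $Q_{t-k+j}=0$ for $j\in S_i$, $0<j<k$ (equivalently $Q_{t-k'}=0$ for $k'\notin S_i$), not ``$Q_{t-j}=0$ for all $j\in S_i$''; the latter is false in general (take $P=1$, $Q=F$, $j=\gamma_1$ with $i\ge 2$), though your expansion step at $\gamma_i$ correctly uses the right version.
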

\begin{lemma}
    Let $F(\X) = F_d(\X) + F_{d-\gamma_1}(\X) + \dots +F_{d-\gamma_m}(\X) \in \ff[\X]$. Let $F_d(\X)$ be square-free and $(F_d , F_{d-\gamma_1} , \dots ,F_{d-\gamma_m}) =1$. If $k \notin \text{span}_{\n}\{ \gamma_1, \gamma_2, \dots, \gamma_{m-1}\}$, $0< k < \gamma_m$, and $F(\X) = (P_s+P_{s-1}+ \dots+P_0)(Q_t+Q_{t-1}+ \dots+ Q_0)$, then $F_{d-k} = P_sQ_{t-k}+ P_{s-k}Q_t$  
\end{lemma}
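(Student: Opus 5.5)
The argument is the claim from the proof of Theorem \ref{thm: heptanomials}, now stated as a lemma; the plan is to redo that induction on degrees. Fix a factorization $F=(P_s+\dots+P_0)(Q_t+\dots+Q_0)$. Comparing top forms gives $F_d=P_sQ_t$. Since $F_d$ is square-free, $(P_s,Q_t)=1$.

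The key elementary fact is the following. Whenever a degree $d-j$ satisfies
\[
P_sQ_{t-j}+P_{s-j}Q_t=0,
\]
we get $P_s\mid P_{s-j}Q_t$, hence $P_s\mid P_{s-j}$. Because $\deg P_{s-j}<\deg P_s$, this forces $P_{s-j}=0$, and symmetrically $Q_{t-j}=0$.

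The claim to prove is, for each $0<k<\gamma_m$ with $k\notin\spann$, that
\[
\sum_{0<i<k}P_{s-i}Q_{t-k+i}=0.
\]
The identity $F_{d-k}=P_sQ_{t-k}+P_{s-k}Q_t$ then follows. I would prove this by strong induction on $k$, carrying alongside it the vanishing statement $P_{s-j}=Q_{t-j}=0$ for all $j\notin\spann$ with $0<j<k$.

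For the inductive step, split the middle sum according to whether $i\in\spann$.

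\emph{Terms with $i\notin\spann$.} Here $i<k$, so $i$ is a smaller non-span degree. The induction hypothesis gives the identity at degree $d-i$. Since $i<\gamma_m$ and $i\notin\spann$, the index $i$ is not one of $\gamma_1,\dots,\gamma_{m-1}$, so $F_{d-i}=0$. The key fact then gives $P_{s-i}=0$.

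\emph{Terms with $i\in\spann$.} Then $k-i\notin\spann$, since otherwise $k=i+(k-i)$ would lie in the span. Also $0<k-i<k$, so the same reasoning gives $Q_{t-(k-i)}=Q_{t-k+i}=0$. Hence every middle term vanishes.

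This replaces the paper's minimal-$a$/maximal-$b$ contradiction bookkeeping by a cleaner induction, which I expect to be equivalent. The base case $k$ equal to the smallest non-span positive integer is automatic, because the middle sum is then empty of surviving terms by the same argument.

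The main obstacle is the step $F_{d-i}=0$ for a non-span $i<\gamma_m$: one must check that the $\gamma_j$ with $j<m$ are all in the span, while non-span degrees below $\gamma_m$ carry no form of $F$. This requires $\gamma_m$ to be the largest gap, as in the theorem's ordering $\gamma_1<\dots<\gamma_m$; I would state that assumption explicitly.
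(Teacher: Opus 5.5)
Your proof is correct and is essentially the paper's argument. It uses the same split of the middle sum by membership in the span, the same observation that $i$ in the span forces $k-i$ out of it, and the same use of $(P_s,Q_t)=1$ plus a degree count; your strong induction, carrying both $P_{s-j}=0$ and $Q_{t-j}=0$, simply replaces the paper's minimal-$a$/maximal-$b$/maximal-$c$ contradiction bookkeeping.

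One correction to your ``main obstacle'': no ordering of the $\gamma_j$ is needed for $F_{d-i}=0$. A non-span $i$ automatically differs from $0$ and from every $\gamma_j$ with $j<m$, since all of these lie in the span, and $i<\gamma_m$ excludes $\gamma_m$.
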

}

\section{Generalization of the degree-gap concept and an improved bound on the number of absolutely irreducible factors}\label{sect: consequences of the main theorem}
\begin{definition}
    Let $F(\X) = F_d(\X) + F_{d-\gamma_1}(\X) + \dots +F_{d-\gamma_m}(\X) \in \ff[\X]$ and $F_d(\X)$ be square-free. We define the \textbf{$i^{th}$-degree-gap} $\gamma_i(F)$ as $d-\gamma_i$. If $F(\textbf{X})$ contains less than $i$ forms, then $\gamma_i(F)$ is defined as infinity.    
    
\end{definition}

\begin{theorem}\label{thm: generalization of extending the degree-gap}
Let $F(\X) = F_d(\X) + F_{d-\gamma_1} + F_{d-\gamma_2}(\X) + \dots + F_{d-\gamma_m}(\X) \in \ff[\X]$. Let $k \in \{1,2, \dots, m\}$ be such that $\gamma_k \notin \text{span}_{\n}\{\gamma_1, \gamma_2, \dots, \gamma_{k-1}\}$ and $(F_d, F_{d-\gamma_k}) =1$. If $P(\X) \mid F(\X)$, and $F_d(\X)$ is square-free, then $\deg(P) \geq \gamma_k$.     
\end{theorem}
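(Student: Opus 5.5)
We argue by contradiction, reusing the machinery from the proof of Theorem \ref{thm: heptanomials}. Suppose $P(\X)\mid F(\X)$ with $\deg(P)<\gamma_k$, and write $F=PQ$ with $P=P_s+P_{s-1}+\dots+P_0$ and $Q=Q_t+Q_{t-1}+\dots+Q_0$, where $P_i$, $Q_j$ are forms of degree $i$, $j$ or zero and $s=\deg(P)<\gamma_k$. Then $F_d=P_sQ_t$. Since $F_d$ is square-free, $(P_s,Q_t)=1$. We will also assume $s\ge 1$ (for a constant $P$ there is nothing to prove, since then $P$ is a unit or the statement is vacuous).

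The first step is to rerun Parts I and II of the proof of Theorem \ref{thm: heptanomials} with $m$ replaced by $k$. That argument never used the forms $F_{d-\gamma_j}$ for $j>k$; it only used that $F_{d-j}=0$ for every $0<j<\gamma_k$ with $j\notin\text{span}_{\n}\{\gamma_1,\dots,\gamma_{k-1}\}$. This remains true here, because such $j$ is none of $\gamma_1,\dots,\gamma_{k-1}$ and is smaller than $\gamma_k$. The conclusion, as in the second lemma following Theorem \ref{thm: heptanomials}, is that $P_{s-j}=Q_{t-j}=0$ for all such $j$, and moreover that
\[
F_{d-\gamma_k}=P_sQ_{t-\gamma_k}+P_{s-\gamma_k}Q_t ,
\]
as in the first lemma following Theorem \ref{thm: heptanomials}. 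The sums indexed by $i\notin\text{span}$ and $i\in\text{span}$ vanish for the same minimality and maximality reasons. I would also check that the coprimality step $P_s\mid P_{s-a+b}Q_t\Rightarrow P_{s-a+b}=0$ only needs $(P_s,Q_t)=1$ and a degree comparison, so it survives unchanged.

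Next we use $s<\gamma_k$. This gives $s-\gamma_k<0$, so $P_{s-\gamma_k}=0$, and hence $F_{d-\gamma_k}=P_sQ_{t-\gamma_k}$. Therefore $P_s$ divides $F_{d-\gamma_k}$. Since $P_s$ also divides $F_d$ and $(F_d,F_{d-\gamma_k})=1$, the form $P_s$ is a constant. But $P_s$ is the leading form of $P$ and has degree $s\ge 1$, which is a contradiction. Hence $\deg(P)\ge\gamma_k$.

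The main obstacle is the first step: checking that the nested minimality and maximality argument of Parts I and II really goes through when only $\gamma_1,\dots,\gamma_k$ are controlled and $F$ may have further nonzero forms $F_{d-\gamma_j}$, $j>k$, of degree below $d-\gamma_k$. Every degree equated in that argument lies strictly between $d-\gamma_k$ and $d$, so those lower forms never enter. I would verify this index by index, and also handle the edge cases where the indices $s-i$ or $t-k+i$ become negative, where the corresponding forms are simply zero.
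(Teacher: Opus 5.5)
Your proof is correct and follows the paper's argument. You reuse the Part I/II machinery of Theorem~\ref{thm: heptanomials} with $m$ replaced by $k$ to reach $F_{d-\gamma_k}=P_sQ_{t-\gamma_k}+P_{s-\gamma_k}Q_t$, and your contradiction from $s<\gamma_k$ (forcing $P_s\mid (F_d,F_{d-\gamma_k})$) is just the contrapositive of the paper's step ``$(F_d,F_{d-\gamma_k})=1$ implies $P_{s-\gamma_k}\neq 0$''. One wording correction: a constant $P$ is not a vacuous case but a genuine exception to the conclusion, so the statement has to be read for nonconstant factors, as both you and the paper implicitly do.
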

\begin{proof}
    Let $F(\X) = (P_S+ P_{s-1}+ \dots + P_0)(Q_t + Q_{t-1} + \dots + Q_0)$, where $P_{i}$ (resp. $Q_j$) is either a form of degree $i$ (resp.  $j$) or zero. Then 
    \begin{equation*}
        F_d = P_sQ_t
    \end{equation*}
    where $(P_s, Q_t) =1$. By the proof of Theorem \ref{thm: heptanomials} for $i \notin \text{span}_{\n}\{\gamma_1, \dots, \gamma_{k-1} \}$, $i < \gamma_{k}$, we have 
    \begin{equation*}
        F_{d-i} = P_sQ_{t-i}+ P_{s-i}Q_t.
    \end{equation*}
Since $F_{d-i} =0$, 
\begin{equation*}
    P_sQ_{t-k} = P_{s-k}Q_t.
\end{equation*}
Therefore, $P_s \mid P_{s-i}$ and $Q_{t} \mid Q_{t-i}$, hence $P_{s-i} = Q_{t-i} =0$ for all $i \notin \text{span}_{\n}\{\gamma_1, \dots, \gamma_{k-1} \}$, $i < \gamma_{k}$.
\end{proof}

Consider the homogeneous form of degree $d-\gamma_k$. 

\begin{equation*}
    F_{d-\gamma_k}  = \sum_{i=0}^{\gamma_k} P_{s-i}Q_{t-\gamma_k + i} =
\end{equation*}
 \begin{equation} \label{eqn: term of degree d-k general n-gap}
     P_sQ_{t-\gamma_k} + \sum_{\mathclap{i \notin \spannnn, \atop 0< i<\gamma_k}} P_{s-i}Q_{t-\gamma_k+i} + P_{s-\gamma_k}Q_t + \sum_{\mathclap{i \in \spannnn, \atop 0< i<\gamma_k}} P_{s-i}Q_{t-\gamma_k+i}.
 \end{equation}

Observe that from the proof of Theorem \ref{thm: heptanomials}:
\begin{enumerate}
    \item $\sum_{i \notin \spannnn, \atop 0< i<\gamma_k} P_{s-i}Q_{t-k+i} = 0$ as $P_{s-i} = 0$ for all $i \notin \spannnn$, $0<i<\gamma_k$.
    \item $\sum_{i \in \spannnn, \atop 0< i<\gamma_k} P_{s-i}Q_{t-\gamma_k+i} = 0$ as $Q_{t-\gamma_k+i} = 0$ for all $i \in \spannnn,$ $0< i<\gamma_k$. 
\end{enumerate}
Therefore, we can rewrite Equation \eqref{eqn: term of degree d-k general n-gap} as 
\begin{equation}\label{eqn: equation 16}
    F_{d-\gamma_k} = P_sQ_{s-\gamma_k} + P_{s-\gamma_k}Q_t. 
\end{equation}
Since $(F_d, F_{d-\gamma_k})=1$, $Q_{t-\gamma_k} \neq 0$ and $P_{s-\gamma_k} \neq 0$. Therefore, $s-\gamma_k \geq 0$, hence $s= \deg(P) 
\geq \gamma_k$. 
\begin{remark}\label{rmk: the existence of the term gmmmak}
A direct consequence of this theorem is that if $P(X) = P_s + P_{s-1}+ \dots + P_0$ is a factor of $F(X)$, then $P_{s-\gamma_k} \neq 0$, where $k$ is an in Theorem \ref{thm: generalization of extending the degree-gap}. 
\end{remark}
\begin{corollary}
    Let $F(\X) = F_d(\X) + F_{d-\gamma_1}(\X) + \dots + F_{d-\gamma_m}(\X) \in \ff[\X]$. Let $k \in \{1, \dots, m\}$ be the maximum $k$ that meets the conditions of Theorem \ref{thm: generalization of extending the degree-gap}. If $F_d$ is square-free, then $F(\X)$ has at most $\floor*{\frac{\deg(F)}{\gamma_k}}$ factors.
\end{corollary}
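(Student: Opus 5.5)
The corollary follows from Theorem \ref{thm: generalization of extending the degree-gap} by a degree count. I will use that theorem for the maximal admissible index $k$, i.e.\ the largest $k$ with $\gamma_k \notin \text{span}_{\n}\{\gamma_1,\dots,\gamma_{k-1}\}$ and $(F_d,F_{d-\gamma_k})=1$.

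First I make precise what ``factors'' means. Write a factorization of $F$ over $\overline{\ff}$ into non-constant factors:
\begin{equation*}
F = P^{(1)}P^{(2)}\cdots P^{(r)},
\end{equation*}
with each $P^{(j)}$ non-constant. The goal is to show $r \le \floor*{\deg(F)/\gamma_k}$.

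The key step is to check that each $P^{(j)}$ divides $F$ over $\overline{\ff}$, so that Theorem \ref{thm: generalization of extending the degree-gap} applies to it.
\begin{itemize}
\item The hypotheses of that theorem concern only $F$: $F_d$ is square-free, $\gamma_k$ lies outside the $\n$-span of the earlier gaps, and $(F_d,F_{d-\gamma_k})=1$.
\item Square-freeness and $\gcd$ conditions are preserved under extension of scalars. Coprimality is preserved because the $\gcd$ is computed by the Euclidean algorithm and does not depend on the field.
\item The proof of the theorem only manipulates graded components of a two-factor factorization $F = P\cdot Q$. For each $j$, take $P = P^{(j)}$ and $Q = \prod_{l\neq j}P^{(l)}$; this works over any field.
\end{itemize}
Hence $\deg(P^{(j)}) \ge \gamma_k$ for every $j$.

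Summing the degrees gives
\begin{equation*}
\deg(F) = \sum_{j=1}^r \deg(P^{(j)}) \ge r\gamma_k,
\end{equation*}
so $r \le \deg(F)/\gamma_k$. Since $r$ is an integer, $r \le \floor*{\deg(F)/\gamma_k}$.

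The only delicate point is that Theorem \ref{thm: generalization of extending the degree-gap} assumes $(F_d,F_{d-\gamma_k})=1$. The corollary states only that $F_d$ is square-free, but choosing $k$ to ``meet the conditions'' of the theorem supplies this coprimality, and I will note that explicitly.

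Choosing the maximum such $k$ is not needed for the bound to hold. It just gives the strongest bound when the gaps $\gamma_k$ increase with $k$. Any admissible $k$ yields $r \le \floor*{\deg(F)/\gamma_k}$, so the bound for the maximal $k$ is the sharpest one this argument produces.
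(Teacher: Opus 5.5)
Your argument is correct and is the intended one: the paper gives no separate proof of this corollary, treating it as an immediate consequence of Theorem~\ref{thm: generalization of extending the degree-gap}, and you supply the degree count (every non-constant factor has degree at least $\gamma_k$, so $r\gamma_k \le \deg(F)$). One imprecision is in how you pass to $\overline{\ff}$: multivariate gcds are not computed by a Euclidean algorithm, though they are still field-independent, and square-freeness of $F_d$ survives over $\overline{\ff}$ because $\ff$ is finite and hence perfect, not because of any gcd computation.
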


\begin{corollary}
    Let $F(\X) = F_d(\X) + F_{d-\gamma_1}(\X) + \dots + F_{d-\gamma_m}(\X)\in \ff[\X]$. Let $k \in \{1, \dots, m\}$ be the maximum $k$ that meets the conditions of Theorem \ref{thm: generalization of extending the degree-gap}. If $F_d$ is square-free and $\gamma_m < 2\gamma_k$, then $F(\X)$ is absolutely irreducible. 
\end{corollary}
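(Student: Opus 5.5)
We argue by contradiction and reduce to the previous corollary's mechanism, Theorem \ref{thm: generalization of extending the degree-gap}. Suppose $F = PQ$ with both $P$ and $Q$ nonconstant. Write $P = P_s + P_{s-1} + \dots + P_0$ and $Q = Q_t + \dots + Q_0$, so that $F_d = P_sQ_t$. Since $F_d$ is square-free, $(P_s,Q_t)=1$.

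The first step uses the maximal index $k$ satisfying the hypotheses of Theorem \ref{thm: generalization of extending the degree-gap}. That theorem applies to both factors, so $\deg P = s \geq \gamma_k$ and $\deg Q = t \ge \gamma_k$. By Remark \ref{rmk: the existence of the term gmmmak}, we also get $P_{s-\gamma_k}\neq 0$ and $Q_{t-\gamma_k}\neq 0$.

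The second step turns this into a degree count at the bottom of the expansion. The lowest-degree form of $F$ is $F_{d-\gamma_m}$. It equals the product of the lowest-degree nonzero forms of $P$ and $Q$, because over a domain the product of the lowest forms is nonzero. The lowest nonzero form of $P$ has degree at most $s-\gamma_k$, since $P_{s-\gamma_k}\ne0$. Likewise the lowest nonzero form of $Q$ has degree at most $t-\gamma_k$. Hence
\[
d-\gamma_m \le (s-\gamma_k)+(t-\gamma_k) = d-2\gamma_k ,
\]
so $\gamma_m \ge 2\gamma_k$. This contradicts the hypothesis $\gamma_m<2\gamma_k$, so no nontrivial factorization exists.

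The main point to check is that all of this holds over the algebraic closure. Nothing in the argument depends on the ground field: square-freeness of $F_d$, the coprimality hypothesis $(F_d,F_{d-\gamma_k})=1$, and the span condition are all preserved under field extension. So we run the argument with $P,Q\in\overline{\ff}[\X]$, and absolute irreducibility follows.

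A secondary subtlety is that Remark \ref{rmk: the existence of the term gmmmak} is stated for a single factor $P$. We apply it symmetrically to $Q$ as the complementary factor; this is justified because Equation \eqref{eqn: equation 16} together with $(F_d,F_{d-\gamma_k})=1$ forces both $P_{s-\gamma_k}$ and $Q_{t-\gamma_k}$ to be nonzero. If that equation were only available in the form of a divisibility, we would instead argue as follows. If $P_{s-\gamma_k}=0$, then $P_s\mid F_{d-\gamma_k}$, and $P_s$ also divides $F_d$. Since $P_s$ is nonconstant whenever $P$ is, this contradicts the coprimality.
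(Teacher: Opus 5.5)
Your proof is correct and follows the paper's argument: Remark~\ref{rmk: the existence of the term gmmmak} gives $P_{s-\gamma_k}\neq 0$ and $Q_{t-\gamma_k}\neq 0$. Since the tangent cone of $F$, of degree $d-\gamma_m$, is the product of the lowest forms of $P$ and $Q$, its degree is at most $(s-\gamma_k)+(t-\gamma_k)=d-2\gamma_k$, which forces $\gamma_m\ge 2\gamma_k$. Your extra care makes explicit points the paper's one-line inequality leaves implicit: taking $P,Q$ nonconstant so that the coprimality step $P_s\mid (F_d,F_{d-\gamma_k})$ really is a contradiction, bounding via the lowest nonzero forms, and working over the algebraic closure.
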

\begin{proof}

Assume that $F(\textbf{X}) = P(\textbf{X}) Q(\textbf{X})$. Then by Remark \ref{rmk: the existence of the term gmmmak}, we have $P_{s-\gamma_k} \neq 0$ and $Q_{t-k} \neq 0$. Therefore, $\deg(T_F) = d-\gamma_m \leq \deg(P_{s-\gamma_k}*Q_{t-\gamma_k}) = s-\gamma_k + t - \gamma_k = d- 2\gamma_k$. Therefore, $\gamma_m \geq 2\gamma_k$ a contradiction.

\end{proof}

\section{Conclusion\textcolor{black}{s}}\label{sect: conclusions}

Absolute irreducibility testing of multivariate polynomials is very important in number theory and algebraic geometry.

In this article, we have given criteria for absolute irreducibility that do not require testing for irreducibility in the ground or extension fields, assuming that the
leading form is square-free. We require multivariate GCD computations
and the square-free property. 
In the case of bivariate polynomials, these two properties reduce to an irreducibility test for a single-variable polynomial computation for the leading homogeneous form. Fast algorithms exist for irreducibility testing over finite fields. 
Therefore, we have fast absolute irreducibility criteria for curves over finite fields. Since curves over finite fields have many applications in coding theory, cryptography, and combinatorics, we have given absolute irreducibility criteria that have important applications.

Indeed, we have applied our criterion to demonstrate the absolute irreducibility of APN polynomials and exceptional polynomials. As a consequence, we have made progress in resolving the exceptional APN conjecture in Agrinsoni, Janwa, and Delgado \cite{AJDKasami}.

As we have quoted,  almost all polynomials are known to be
square-free; our absolute irreducibility criterion is valid for almost all multivariate polynomials.
\textcolor{black}{We have shown by way of counterexamples that our criterion is the
best possible.}

Existing criteria, for example the one given by Stepanov \cite{stepanov1972, stepanov1974}, are applicable to very specific types of polynomials
(e.g., the Kummer extensions). The general criterion of Gao is, in general, impractical, as the algorithm requires the indecomposability of polytopes and is known to be NP-complete \cite{gao2001}.

\end{document}